\newtheorem{theorem}{Theorem}[section]
\newtheorem{lemma}[theorem]{Lemma}
\newtheorem{proposition}[theorem]{Proposition}
\newtheorem{remark}[theorem]{Remark}
\theoremstyle{definition}
\theoremstyle{remark}
\numberwithin{equation}{section}
\newcommand\bes{\begin{eqnarray}}
\newcommand\ees{\end{eqnarray}}
\newcommand{\bess}{\begin{eqnarray*}}
\newcommand{\eess}{\end{eqnarray*}}
\newcommand{\ba}{\begin{array}}
\newcommand{\ea}{\end{array}}
\newcommand{\f}{\frac}
\newcommand{\Om}{\Omega}
\newcommand{\lf}{\left}
\newcommand{\rr}{\right}
\newcommand{\R}{{\mathbb R}}
\newcommand{\ds}{\displaystyle}
\newcommand{\dd}{\displaystyle}
\newcommand{\td}{\tilde}
\newcommand\yy{\infty}
\newcommand\ttt{t\to\yy}
\newcommand{\ol}{\overline}
\begin{document}
 \pagestyle{myheadings}


\date{}
\title{ \bf\large{Global stability of spatially nonhomogeneous steady state solution in a diffusive Holling-Tanner predator-prey model}\footnote{Partially supported by NSF Grant DMS-1853598, and NSFC Grants 11771110.}}
\author{Wenjie Ni\textsuperscript{1}\footnote{Corresponding Author.},\ \ Junping Shi\textsuperscript{2},\ \ Mingxin Wang\textsuperscript{3}
\\
{\small \textsuperscript{1} School of Science and Technology, University of New England, Armidale, NSW 2351, Australia\hfill{\ }}\\
{\small \textsuperscript{2} Department of Mathematics, William \& Mary,  Williamsburg, VA, 23187-8795, USA.\hfill{\ }}\\
{\small \textsuperscript{3} School of Mathematics, Harbin Institute of Technology, Harbin, 150001, China. \setcounter{footnote}{-1}{}\footnote{{\it E-mails}: wni2@une.edu.au (W.-J. Ni), jxshix@wm.edu (J.-P. Shi),
			mxwang@hit.edu.cn (M.-X. Wang)}\hfill{\ }}\\}
\maketitle

\begin{abstract}
The global stability of the nonhomogeneous positive steady state solution to a diffusive Holling-Tanner predator-prey model  in a heterogeneous  environment is proved by using a newly constructed Lyapunov function and estimates of nonconstant steady state solutions.  The techniques developed here can be adapted for other spatially heterogeneous consumer-resource models.\\
{\bf Keywords:} diffusive predator-prey model; heterogeneous  environment; global stability; Lyapunov function.\\
{\bf MSC 2020:} 35K51, 35B40, 35B35, 92D25
\end{abstract}

\section {Introduction}
In this paper we study the global dynamics of the following diffusive Holling-Tanner predator-prey model in a heterogeneous environment:
\begin{equation}\label{sp3}
\begin{cases}
\ds u_t=d_1(x)\Delta u+u\lf(a(x)-u-\frac{b v}{1+{r} u}\rr), & x\in \Omega,\; t>0,\\
\ds v_t=d_2(x)\Delta v+\mu v\lf(1-\frac{v}{u}\rr), & x\in \Omega,\; t>0,\\
\ds \f{\partial u}{\partial\nu}=\ds \f{\partial v}{\partial\nu}=0,& x\in \partial\Omega,\;
t>0,\\
u(x,0)=u_0(x),\ v(x,0)=v_0(x), & x\in \Omega.
\end{cases}
\end{equation}
Here  $u(x,t)$ and $v(x,t)$ are the density functions of prey and predator respectively, and $\Om$ is a bounded domain in $\R^n$ with a smooth boundary $\partial \Omega$; a no-flux boundary condition is imposed on $\partial \Omega$ so that the ecosystem is closed to exterior environment. $d_1(x)$ and $d_2(x)$ are the spatially dependent diffusion coefficient functions of prey and predator respectively; $a(x)$ is the spatially heterogeneous resource function, and other parameters   $b$, $r$ and $\mu$ are assumed to be constants. The non-spatial version of \eqref{sp3} was introduced in
\cite{may1974stability,tanner1975} as one of prototypical mathematical models describing predator-prey interactions.

For the non-spatial ODE model corresponding to \eqref{sp3}, it is known that for certain parameter range the unique positive steady state is globally asymptotically stable, while in other parameter range a unique limit cycle exists \cite{hsu1995,hsu1998}. The spatial model \eqref{sp3} in a homogeneous environment (assuming $d_i,a$ are constants) was first studied in \cite{peng2005}. The global stability of the positive constant steady state solution for the homogeneous case was proved in  \cite{cs2012,pw2007,Qi2016} under different conditions on parameters, and spatiotemporal pattern formation  for the homogeneous system \eqref{sp3} was considered in \cite{Jiang2013}. When $r=0$ in \eqref{sp3}, the system becomes to the  Leslie-Gower predator-prey model. The global stability for that case including delay effect was investigated in \cite{csw2012,Qi2016-2} when $a$ and $d_i$ are constants, see also \cite{Du2004,Min2019,Ni2016,Ni2017} for related work.

We define the nonlinearities in \eqref{sp3} to be
\begin{equation}\label{fg}
\ds f(y,u,v):=u\lf(y-u-\frac{b v}{1+{r} u}\rr),\ \ g(u,v):=\mu v\lf(1-\frac{v}{u}\rr),
\end{equation}
and we also denote
\begin{equation}\label{aa}
  \bar a:=\dd\max_{\ol\Omega} a(x), \;\; \underline a:=\dd\min_{\ol\Omega} a(x).
\end{equation}
Our results on the global dynamics of \eqref{sp3} are as follows:
\begin{theorem}\label{thm1}
Let  $\mu>0$ and $r\geq 0$ be  constants. Suppose that  $a$, $d_i\in C^{\alpha}(\ol\Omega)$ for some $0<\alpha<1$,  and $a(x)>0$, $d_i(x)>0$  on $\ol\Omega$; and the initial  functions $u_0\in C(\ol\Omega)$ and $v_0\in C(\ol\Omega)$ satisfy $u_0\geq,\not\equiv 0$ and $v_0\geq,\not\equiv 0$ on $\ol\Omega$. Assume that
\begin{align}\label{2.2}
0<b<{\underline a}/{\bar a}:=\frac{\min_{\ol\Omega} a(x)}{\max_{\ol\Omega} a(x)}.
\end{align}
\begin{enumerate}
  \item[{\rm (i)}] There exists a unique positive solution $(\underline u_\yy,\bar u_\yy,\underline v_\yy,\bar v_\yy)$ to the system of equations:
	\begin{equation*}
	f(\bar a,\bar u_\yy,\underline v_\yy)=0,\ \ f(\underline a,\underline u_\yy,\bar v_\yy)=0,\quad
	g(\bar u_\yy,\bar v_\yy)=0,\ \ g(\underline u_\yy,\underline v_\yy)=0.
	\end{equation*}
  \item[{\rm (ii)}] Let $(u(x,t),v(x,t))$ be the positive solution of problem \eqref{sp3}. Then
 \bes\label{bound}
 \left\{\begin{array}{ll}
\ds \underline u_{\yy}\leq \liminf_{t\to\yy}u(x,t)\leq \limsup_{t\to\yy}u(x,t)\leq \bar u_{\yy},\\
\ds \underline v_{\yy}\leq \liminf_{t\to\yy}v(x,t)\leq \limsup_{t\to\yy}v(x,t)\leq \bar v_{\yy}.
  \end{array}\right.\ees
  \item[{\rm (iii)}] If in addition,
  \bes\label{4.6}
 b<(1+2r \underline u_\yy-r \underline a)\lf[\frac{\min d_1(x)\min d_2(x)}
{\max d_1(x)\max d_2(x)} \rr]^{1/2} \lf(\frac{\underline u_\yy}{\bar u_\yy}\rr)^{5/2},
\ees
then  the problem \eqref{sp3} has a unique  positive steady state solution  $(u_*,v_*)$,  and
$\ds \lim_{\ttt} u(x,t)=u_*(x)$ and $\ds \lim_{\ttt} v(x,t)=v_*(x)$ in $C^2(\ol\Omega)$.
\end{enumerate}
\end{theorem}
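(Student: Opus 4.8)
The plan is to establish the three assertions of part~(iii)---existence, uniqueness, and $C^2$ convergence---in that logical order, with a single weighted Lyapunov functional carrying most of the work. First, any positive steady state $(u_*,v_*)$ is a time-independent positive solution of \eqref{sp3}, so part~(ii) applies to it; combined with part~(i) (which forces $\bar v_\yy=\bar u_\yy$ and $\underline v_\yy=\underline u_\yy$), this gives the pointwise a priori bounds $\underline u_\yy\le u_*,v_*\le\bar u_\yy$ on $\cOm$, whence $u_*,v_*\in C^{2,\af}(\cOm)$ by elliptic regularity and $u_*\ge\underline u_\yy>0$. These same bounds exclude positive steady states (of \eqref{sp3} and of the natural homotopy toward the $b=0$ logistic case) from the boundary of a large ball, so a standard Leray--Schauder degree argument produces at least one positive steady state $(u_*,v_*)$, which we now fix.

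Next, introduce $\phi=u/u_*$ and $\psi=v/v_*$. Using the steady-state equations to eliminate $a(x)$ and $\mu(1-v_*/u_*)$---in the course of which the $d_1(x)\Delta u_*$ and $d_2(x)\Delta v_*$ terms cancel exactly---one recasts \eqref{sp3}, with homogeneous Neumann conditions, as
\[
\phi_t=\f{d_1}{u_*^2}\,\nabla\!\cdot\!\big(u_*^2\nabla\phi\big)+\phi\Big[u_*(1-\phi)+bv_*\Big(\f{1}{1+ru_*}-\f{\psi}{1+r\phi u_*}\Big)\Big],
\]
\[
\psi_t=\f{d_2}{v_*^2}\,\nabla\!\cdot\!\big(v_*^2\nabla\psi\big)+\mu\,\f{v_*}{u_*}\,\psi\Big(1-\f{\psi}{\phi}\Big),
\]
so that the steady state becomes $(\phi,\psi)\equiv(1,1)$. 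For a constant $c>0$ to be chosen, set
\[
E(t)=\di\int_\Om\Big[\f{u_*^2}{d_1}\big(\phi-1-\ln\phi\big)+c\,\f{v_*^2}{d_2}\big(\psi-1-\ln\psi\big)\Big]\,\rd x\ \ (\ge 0),
\]
which is finite for $t>0$ because the solution is positive and, by part~(ii), eventually bounded above and away from zero. The weights $u_*^2/d_1$ and $v_*^2/d_2$ are chosen precisely so that, after integrating by parts (using $\partial_\nu\phi=\partial_\nu\psi=0$), the diffusion contribution to $\dot E$ collapses to $-\int_\Om u_*^2\phi^{-2}|\nabla\phi|^2\,\rd x-c\int_\Om v_*^2\psi^{-2}|\nabla\psi|^2\,\rd x\le 0$, with no gradient cross-terms and---crucially, since $d_i$ is only $C^\af$---no $\nabla d_i$ terms. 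What is left of $\dot E$ is a pointwise quadratic form $A(x)X^2+B(x)XY+C(x)Y^2$ in $X=\phi-1$, $Y=\psi-1$, whose coefficients depend only on $u_*,v_*,\phi,d_1,d_2$ and $b,r,\mu,c$.

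The crux is to make this quadratic form negative definite. One bounds $A,B,C$ by constants using $\underline u_\yy\le u_*,v_*\le\bar u_\yy$ and the ensuing estimates $\underline u_\yy/\bar u_\yy\le\phi,\psi\le\bar u_\yy/\underline u_\yy$ (valid for $t$ large, by part~(ii)); the identity $\underline a=\underline u_\yy+b\bar u_\yy/(1+r\underline u_\yy)$ from part~(i) is exactly what turns the self-regulation coefficient $-A$ into a positive multiple of $1+2r\underline u_\yy-r\underline a$. Optimizing the negativity criterion $B^2\le 4AC$ over $c>0$ then reduces exactly to the threshold \eqref{4.6}: the diffusivities enter $A,B,C$ only through ratios, so the optimal $c$ brings in the geometric-mean factor $[\min d_1\min d_2/(\max d_1\max d_2)]^{1/2}$, while the accumulated powers of $u_*,v_*\in[\underline u_\yy,\bar u_\yy]$ give $(\underline u_\yy/\bar u_\yy)^{5/2}$. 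Under \eqref{4.6} we thus have $\dot E\le 0$ for $t$ large, with $\dot E=0$ only if $\nabla\phi=\nabla\psi=0$ and $X=Y=0$, i.e. $(u,v)=(u_*,v_*)$. Since the trajectory $\{(u(\cdot,t),v(\cdot,t)):t\ge 1\}$ is precompact in $C^2(\cOm)$ by parabolic Schauder estimates together with the bounds of part~(ii), LaSalle's invariance principle yields $\lim_{\ttt}(u(\cdot,t),v(\cdot,t))=(u_*,v_*)$ in $C^2(\cOm)$; and any other positive steady state $(\wtd u,\wtd v)$ is a stationary solution, hence converges to $(u_*,v_*)$ by this, hence equals it, which gives uniqueness. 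I expect the obstacle to lie entirely in this paragraph---isolating the quadratic form cleanly, controlling the Holling terms when $r>0$, and matching the $c$-optimization to the precise exponents in \eqref{4.6}, which is exactly where the sharp nonconstant-steady-state estimates of part~(ii) are indispensable.
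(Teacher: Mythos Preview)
Your proposal is correct and follows essentially the same approach as the paper: your Lyapunov functional $E$ coincides with the paper's $G$ after expanding $\phi-1-\ln\phi$ in terms of $u,u_*$ (the weights $u_*^2/d_1$, $v_*^2/d_2$ are exactly the paper's $u_*/d_1$, $v_*/d_2$ times the missing $u_*$, $v_*$ from the integrand), your divergence-form rewriting of the diffusion is the equality case of the paper's Lemma~2.6, and your quadratic-form analysis with optimization over $c$ is the paper's $2\sqrt{AC}>|B|$ computation with their parameter~$\eta$. The only differences are peripheral: the paper gets existence of $(u_*,v_*)$ from coupled upper/lower solutions (Proposition~2.1(2)) rather than degree theory, and passes from $\dot E\le -\psi(t)$ to convergence via a Barb\u{a}lat-type lemma (Lemma~2.5) rather than LaSalle.
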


The global stability of the positive steady state solution of \eqref{sp3} in the heterogeneous environment in Theorem \ref{thm1} also holds for $r=0$ which is the Leslie-Gower predator-prey model, and in that case, the condition \eqref{4.6} is simplified to
	\begin{align*}
 b<\lf[\frac{\min d_1(x)\min d_2(x)}
{\max d_1(x)\max d_2(x)} \rr]^{1/2} \lf(\frac{\min a(x)-b\max a(x)}{\max a(x)-b\min a(x)}\rr)^{5/2}.
\end{align*}

If  $d_1$, $d_2$ and $a$ are all constants,  then the problem \eqref{sp3} admits a unique positive constant steady state which can be solved as
\begin{equation}\label{ss}
\begin{split}
&u_\yy=\underline u_\yy=\bar u_\yy=\underline v_\yy=\bar v_\yy=\frac{-(b+1-ar)+\sqrt{(b+1-ar)^2+4ar}}{2r},\ \ \ {\rm when}\ r>0,\\
&u_\yy=\underline u_\yy=\bar u_\yy=\underline v_\yy=\bar v_\yy=\frac{a}{1+b},\ \ \ {\rm when}\ r=0.
\end{split}
\end{equation}
Then the global stability of the constant steady state $(u_\yy,u_\yy)$ in Theorem \ref{thm1} holds under the assumption $b<1$, which is the earlier result of \cite{cs2012}. In this case \eqref{4.6} is not needed as part (ii) already implies the global stability. Part (iii) of Theorem \ref{thm1} shows the global stability in the heterogeneous environment, and the condition \eqref{4.6} depends on the level of heterogeneity of $a(x)$ and $d_i(x)$. Indeed \eqref{4.6} holds when $a$ is nearly constant or when $b$ is sufficiently small (see Remark \ref{end}). We also remark that the global stability of the positive constant steady state of \eqref{sp3} is proved in \cite{Qi2016,Qi2016-2} with weaker condition on $b$ but constant $a$ and  $d_1=d_2$.

The proof of global stability combines the upper-lower solution method used in \cite{cs2012,Qi2016} and a newly developed  Lyapunov  functional method. For spatially homogeneous case, the upper-lower solution method alone can prove the global stability of the constant steady state of \eqref{sp3}, but in the spatially heterogeneous case, it only proves that the solutions are attracted into a rectangle defined as in \eqref{bound}. The Lyapunov function we use inside the attraction zone takes the form
\begin{align*}
\int_{\Om}\int_{u_*(x)}^{u(x,t)} \frac{u_*(x)}{d_1(x)}\frac{s-u_*(x)}{s} \text{d}s\text{d}x+\int_{\Om}\int_{v_*(x)}^{v(x,t)} \frac{v_*(x)}{d_2(x)}\frac{s-v_*(x)}{s} \text{d}s\text{d}x
\end{align*}
where $(u_*(x),v_*(x))$ is a positive steady state solution of \eqref{sp3}. The form of the Lyapunov function when $d_i$ and $a$ are constants is well-known, and here we use a spatially heterogenous form with weigh functions $u_*(x)/d_1(x)$ and   $v_*(x)/d_2(x)$ which is first used in \cite{nsw1} for proving the global stability of positive steady state of diffusive Lotka-Volterra competition system in the heterogeneous environment. It turns out that the weight functions encode the spatial heterogeneity of the environment so a non-constant steady state is achieved asymptotically. The new Lyapunov function developed here may be a useful tool to explore more general diffusive predator-prey models in the nonhomogeneous environment \cite{Du2004,DuShi2007}.

\section {Proof of Main results } 
\subsection{Existence   of positive solutions}
In the subsection, we show the existence and uniqueness  of the solution to \eqref{sp3}, and the existence of positive solution to the corresponding steady state problem:
\begin{equation}\label{esp3}
\begin{cases}
-d_1(x)\Delta u=u\lf(a(x)-u-\ds\frac{b v}{1+{r} u}\rr), & x\in \Omega,\\
\ds -d_2(x)\Delta v=\mu v\lf(1-\frac{v}{u}\rr), & x\in \Omega,\\
\ds \f{\partial u}{\partial\nu}=\ds \f{\partial v}{\partial\nu}=0,& x\in \partial\Omega.
\end{cases}
\end{equation}

We recall that the system \eqref{sp3} is called to be uniformly persistent    (see, e.g., \cite[Page 390]{hw1989}) if all positive solutions satisfy $\dd\liminf_{\ttt}u(x,t)>0$  and $\dd\liminf_{\ttt}v(x,t)>0$ for all  $x\in \bar{\Om}$, and it is permanent (see, e.g., \cite{cc1993,hs1992}) if they also satisfy $\dd\limsup_{t\to\yy}u(x,t)\leq M$ and $\dd\limsup_{t\to\yy}v(x,t)\leq M$ for some $M>0$. The following result shows the basic dynamics of \eqref{sp3}.
\begin{proposition}\label{th2.1} Let $b>0$, $\mu>0$ and $r\geq 0$ be  constants. Suppose that  $a$, $d_i\in C^{\alpha}(\ol\Omega)$ for some $0<\alpha<1$,  and $a(x)>0$, $d_i(x)>0$  on $\ol\Omega$.  The initial  functions $u_0\in C(\ol\Omega)$ and $v_0\in C(\ol\Omega)$ satisfy $u_0\geq,\not\equiv 0$ and $v_0\geq,\not\equiv 0$ on $\ol\Omega$.
	
\begin{itemize}
\item[{\rm (1)}] The problem \eqref{sp3} has a unique globally-defined  solution $(u(x,t),v(x,t))$ satisfying $u(x,t)>0$, $v(x,t)>0$ for $(x,t)\in\ol\Omega\times(0,\infty)$.

\item[{\rm (2)}] For any given small $\epsilon_1>0$, there exist  a constant  $T_1>0$ determined by $\epsilon_1$ and a constant $\epsilon_2\in (0,\epsilon_1]$ depending on initial functions such that
\begin{align}\label{2.3}
\epsilon_2\leq u(x,t), v(x,t)\leq \bar a+\epsilon_1,\  \ \forall\ x\in\ol\Omega,\ t\geq T_1,
\end{align}
which  implies that the problem \eqref{sp3} is permanent. Moreover the problem \eqref{sp3} has a positive steady state solution $(u_*(x),v_*(x))$ lying in $[\epsilon_2,\bar a+\epsilon_1]\times [\epsilon_2,\bar a+\epsilon_1]$.
\item[{\rm (3)}] There exits a constant $C=C(\epsilon_2)>0$ such that
\bes\label{2.4}
\max_{t\geq T_1}\|u(\cdot,t)\|_{C^{2+\alpha}(\ol\Omega)}, \max_{t\geq T_1}\|v(\cdot,t)\|_{C^{2+\alpha}(\ol\Omega)}
\leq C.
\ees
\end{itemize}
\end{proposition}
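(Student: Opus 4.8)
The plan is to treat the three assertions in turn, the substantive step being the lower bound in part~(2). For part~(1), I would obtain a local-in-time classical solution from the standard theory for semilinear parabolic systems: under the no-flux condition the operators $u\mapsto d_i(x)\Delta u$ generate analytic semigroups on $C(\ol\Omega)$, and the reaction terms $f,g$ in \eqref{fg} are smooth on $\{u>0\}$. The one subtlety is that $g$ is singular at $u=0$, which matters only at $t=0$ when $u_0$ may vanish; I would remove it by solving the regularized problems with $u_0$ replaced by $u_0+1/n>0$, deriving $n$-independent estimates, and letting $n\to\infty$. Positivity for $t>0$ then follows from the strong maximum principle, since on any finite time interval $u_t-d_1(x)\Delta u\ge -Cu$ and $v_t-d_2(x)\Delta v\ge -Cv$ once the solution is known to be bounded. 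Finally, global existence reduces to an a priori $L^\infty$ bound: $u_t-d_1(x)\Delta u\le u(\bar a-u)$ and comparison with the logistic ODE give $u\le M_1:=\max\{\|u_0\|_\infty,\bar a\}$, then $v_t-d_2(x)\Delta v\le\mu v(1-v/M_1)$ gives $v\le M_2:=\max\{\|v_0\|_\infty,M_1\}$; since in addition $u(\cdot,t)\ge c(t)>0$ on compact time intervals (again by the maximum principle), the quotient $v/u$ stays bounded and no finite-time blow-up occurs.

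For part~(2), the upper estimates in \eqref{2.3} are the asymptotic versions of the same comparisons: $\limsup_{t\to\infty}u\le\bar a$, hence $\limsup_{t\to\infty}v\le\bar a$, so for any $\epsilon_1>0$ there is $T_1$ with $u,v\le\bar a+\epsilon_1$ for $t\ge T_1$. The lower bound $\epsilon_2$ is exactly the uniform persistence of \eqref{sp3}, which I would obtain from the abstract persistence theory for semiflows (\cite{hw1989,cc1993,hs1992}). The hypotheses to verify are point dissipativity (from the upper bound), asymptotic smoothness of the semiflow (from the parabolic smoothing used in part~(3)), and the uniform weak-repeller property of the invariant sets on the extinction boundary $\{v\equiv 0\}$, namely $(0,0)$ and $(\theta,0)$, where $\theta$ is the unique, globally attracting positive steady state of the scalar logistic problem $-d_1(x)\Delta\theta=\theta(a(x)-\theta)$ with no-flux conditions. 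This last property reduces to sign information on principal Neumann eigenvalues: that of $d_1(x)\Delta+a(x)$ is positive (working in the weighted space $L^2(\Omega;d_1^{-1}\rd x)$ in which the operator is self-adjoint, the Rayleigh quotient at the constant function equals $(\int_\Omega a\,d_1^{-1}\,\rd x)/(\int_\Omega d_1^{-1}\,\rd x)>0$), and that of $d_2(x)\Delta+\mu$ equals $\mu>0$ (the constant function is its principal eigenfunction). Uniform persistence of both $u$ and $v$ then delivers $\epsilon_2>0$, and the existence of a positive steady state confined to $[\epsilon_2,\bar a+\epsilon_1]^2$ follows from the standard fact that a point-dissipative, uniformly persistent semiflow has an equilibrium in its interior global attractor (equivalently, via a Leray--Schauder degree computation for \eqref{esp3} on that order interval), any positive steady state being forced into the rectangle by the same a priori bounds.

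For part~(3), I would run a parabolic bootstrap on the sliding windows $[t,t+1]$ with $t\ge T_1$. There $\epsilon_2\le u,v\le\bar a+\epsilon_1$, so $v/u$ is regular and $f,g$ are uniformly bounded; $L^p$ parabolic estimates give uniform-in-$t$ bounds in $W^{2,1}_p$, hence in $C^{1+\beta,(1+\beta)/2}$ for $\beta<1$ close to $1$ by Sobolev embedding; since $a,d_i\in C^{\alpha}(\ol\Omega)$, the resulting right-hand sides lie in $C^{\alpha,\alpha/2}$, and parabolic Schauder estimates upgrade this to uniform parabolic H\"older bounds $C^{2+\alpha,1+\alpha/2}$ on each slab, which in particular yield \eqref{2.4}.

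I expect the genuine obstacle to be the lower bound in part~(2). The singular factor $v/u$ obstructs a direct application of the persistence machinery near $\{v\equiv 0\}$ (for instance the linearization at $(0,0)$ is not even defined), and, because condition \eqref{2.2} is \emph{not} assumed in this proposition, persistence cannot be obtained from the elementary comparison that handled the upper bounds; one must instead establish the weak-repeller property by an argument exploiting that near the extinction boundary $v$ is slaved to $u$, so that $u$ small forces $v$ small and the prey equation, with its positive principal eigenvalue, then drives the system away from extinction.
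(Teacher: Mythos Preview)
Your outline is broadly sound, but you work much harder than the paper does, and in part~(2) you miss the mechanism the authors actually use.

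For part~(1) the paper does not invoke semigroups or regularization. Because \eqref{sp3} is mixed quasi-monotone on $\{u>0,\,v\ge 0\}$, the authors simply exhibit coupled ordered upper and lower solutions: $(\bar u,\bar v)\equiv(M,M)$ with $M=\max\{\bar a,\|u_0\|_\infty,\|v_0\|_\infty\}$, $\underline v\equiv 0$, and $\underline u(x,t)$ the solution of the scalar problem $u_t=d_1(x)\Delta u+u\big(a(x)-u-bM/(1+ru)\big)$ with data $u_0$. Since $g(\cdot,0)=0$ identically, the singular quotient never enters the lower barrier; $\underline u>0$ for $t>0$ by the strong maximum principle, and Pao's existence theory gives the unique global solution sandwiched between the barriers. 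This sidesteps your $u_0+1/n$ limit entirely.

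The real divergence is in the lower bound of part~(2). You correctly notice that \eqref{2.2} is not listed among the hypotheses, and you infer that elementary comparison cannot deliver the lower bound. In fact the paper's proof \emph{does} use \eqref{2.2}: the authors take $\epsilon_1$ so small that $b<(\underline a-\epsilon_1)/(\bar a+\epsilon_1)$, and once the upper bound $u,v\le \bar a+\epsilon_1$ holds for $t\ge T_1$ they simply pick $\epsilon_2\in(0,\epsilon_1]$ below $\min_{\ol\Omega}\{u(\cdot,T_1),v(\cdot,T_1)\}>0$. The rectangle $[\epsilon_2,\bar a+\epsilon_1]^2$ is then forward-invariant because the constants $(\bar u_1,\underline u_1,\bar v_1,\underline v_1)=(\bar a+\epsilon_1,\epsilon_2,\bar a+\epsilon_1,\epsilon_2)$ are again coupled ordered upper/lower solutions for the system restarted at $T_1$: the key inequality
\[
a(x)-\underline u_1-\frac{b\bar v_1}{1+r\underline u_1}\ge \underline a-\epsilon_1-b(\bar a+\epsilon_1)>0
\]
is exactly where \eqref{2.2} is spent. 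The same constants are upper/lower solutions of the elliptic problem \eqref{esp3}, so the steady state comes for free from the same stroke. No persistence machinery, no eigenvalue computations, no global attractor argument.

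So your appeal to abstract persistence theory and the analysis of the boundary equilibria $(0,0)$, $(\theta,0)$ is a genuinely different, heavier route. It has the \emph{potential} advantage of dispensing with \eqref{2.2}, but, as you yourself flag in your final paragraph, the singular quotient $v/u$ prevents the semiflow from extending continuously to the closure of the positive cone, so the Hale--Waltman framework does not apply out of the box and your weak-repeller argument near $(0,0)$ is only a heuristic. With \eqref{2.2} available (as it tacitly is throughout the paper), the elementary barrier argument is both shorter and complete.

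Part~(3) is handled in the paper by citation to \cite{wmx2016jfa}; your bootstrap sketch is equivalent and fine.
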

\begin{proof}
(1)  We will use the upper and lower solutions method to prove the existence and uniqueness of positive solution of problem \eqref{sp3}. Clearly,  the problem \eqref{sp3} is a mixed quasi-monotone system in the domain $\{u>0,\,v\geq0\}$. Denote
  \[M=\max\left\{\bar a, \ \max_{x\in\ol\Omega}u_0(x), \ \max_{x\in\ol\Omega}v_0(x)\right\}.\]
 Let $\underline v(x,t)=0$, $\bar u(x,t)=\bar v(x,t)\equiv M$, and let $\underline u(x,t)$ be the unique positive solution of
\begin{equation*}
\begin{cases}
u_t=d_1(x)\Delta u+u\lf(a(x)-u-\ds\frac{b M}{1+{r} u}\rr), & x\in \Omega,\; t>0,\\
\ds \f{\partial u}{\partial\nu}=0,& x\in \partial\Omega,\;
t>0,\\
u(x,0)=u_0(x),& x\in \Omega.
\end{cases}
\end{equation*}
Then $(\bar u(x,t),\bar v(x,t))$ and  $(\underline u(x,t),\underline v(x,t))$ are a pair of coupled ordered upper and lower solutions of the problem \eqref{sp3}.  Hence \eqref{sp3} has a unique global solution $(u(x,t),v(x,t))$  satisfying
\bes\label{2.5a}
0<\underline u(x,t)\leq u(x,t)\leq M,\quad 0\leq v(x,t)\leq M,\quad \forall\ x\in\ol\Omega,\ t\geq 0.
\ees
Moreover, by the strong maximum principle we also have $v(x,t)>0$ for $x\in\ol\Omega$ and $t>0$.

(2) From the first equation of \eqref{sp3},  $u(x,t)$ satisfies
\bess
\left\{\begin{array}{ll}
\ds u_t \leq d_1(x) \Delta u+u\left(\max_{x\in\bar{\Omega}}a(x)-u\right),& x \in \Omega,\ t>0,\\[1mm]
	\dd\frac{\partial u}{\partial \nu}=0,\ \ &x\in\partial\Omega,\ t\ge 0,\\[1mm]
	u(x,0)=u_0(x)>0,\ \ &x\in \Omega.
\end{array}\right.\eess
It is deduced by the comparison principle of parabolic equations that
\bess
 \dd\limsup_{t\to\infty}\max_{\ol\Omega}u(x,t)\leq \max_{x\in\bar{\Omega}}a(x)=\bar a.
 \eess
 Thus, for any given $\varepsilon>0$, there is a $T>0$ such that
$u(x,t)<\bar a+\varepsilon$ for $x\in\ol\Omega,\ t\geq T$. From the second equation of \eqref{sp3}, $v(x,t)$ satisfies
\bess
v_t\leq  d_2 \Delta v+\mu v\big(1-v/(\bar a+\varepsilon)\big),  \quad x>\Omega,\ t>T.
\eess
Thanks to   the boundary condition $\frac{\partial v}{\partial \nu}=0$, we could use the comparison principle of parabolic equations  to conclude  that  $\dd\limsup_{t\to\infty}\max_{\ol\Omega}v(x,t)\leq \bar a+\varepsilon$. The arbitrariness of $\varepsilon$ implies
\bess
\dd\limsup_{t\to\infty}\max_{\ol\Omega}v(x,t)\leq \bar a.
\eess
Hence, for a small fixed $\epsilon_1>0$ satisfying
\begin{align}\label{2.6a}
b<(\underline a-\epsilon_1)/(\bar a-\epsilon_1),
\end{align}
 there exists a $T_1>0$  such that the following estimates hold
\bess
u(x,t), v(x,t)\leq \bar a+\epsilon,\  \ \   x\in\ol\Omega,\ t\geq T_1.
\eess
Since $u(x,T_1),v(x,T_1)>0$ on $\ol\Omega$, we can choose a small constant $\epsilon_2>0$ belonging to $(0,\epsilon_1]$  such that $u(x,T),v(x,T)>\epsilon_2$ on $\ol\Omega$. Denote
\bess
\bar u_1:=\bar a+\epsilon_1,\ \ \underline u_1:=\epsilon_2,\ \  \bar v_1:={\bar a}+\epsilon_1,\ \ \underline v_1:=\epsilon_2.
\eess
Then from $\epsilon_2\leq \epsilon_1$ and \eqref{2.6a}, we obtain that
\begin{equation}\label{2.7a}
\begin{cases}
\ds a(x)-\bar u_1-\frac{b \underline v_1 }{1+{r} \bar u_1}\leq a(x)-(\bar a+\epsilon_1)<0, \\
\ds a(x)-\underline u_1-\frac{b \bar v_1 }{1+{r} \bar u_1}\geq a(x)-\epsilon_1-b(\bar a+\epsilon_1)>0,\\
1-{\underline v_1}/{\underline u_1}= 1-{\bar v_1}/{\bar u_1}=0,
\end{cases}
\end{equation}
which indicates that   $(\bar u_1, \underline u_1,\bar v_1, \underline v_1)$ is a pair of coupled ordered upper and lower solutions of the problem \eqref{sp3} with initial density $(u(x,T_1),v(x,T_1))$. Hence \eqref{2.3} holds.
A simple calculation shows that $(\bar u_1, \underline u_1,\bar v_1, \underline v_1)$ is also the coupled ordered upper and lower solutions of the problem \eqref{esp3}. Thus  the problem \eqref{esp3} has a positive  solution $(u_*,v_*)$ in the region  $[\underline u_1,\bar u_1]\times [\underline u_1,\bar u_1]$. 
For (3),
recalling that $u(x,t),v(x,t)>\epsilon_2$ for $x\in\bar{\Omega}$, $t\geq T_1$, we could show    \eqref{2.4}  by the similar arguments as \cite[Theorem 2.1]{wmx2016jfa}. The proof is completed.
\end{proof}

\subsection{Estimates for positive solutions and steady state solutions}

From Proposition \ref{th2.1}, under the assumption \eqref{2.2} every positive solution of \eqref{sp3} has a positive lower  bound which may depend on its initial value. In this subsection,  a uniform lower bound for positive solutions of \eqref{sp3} is obtained. Moreover, by an iterating process using the idea of \cite{pao2002},  we obtain more accurate estimates for positive solutions and steady solutions of problem \eqref{sp3}.

Denote
\bes\label{2.7}
\bar u_1=\bar v_1:=\bar a+\epsilon_1,\ \ \underline u_1=\underline v_1:=\epsilon_2,
 \ \ Q:=[\underline u_1,\bar u_1]\times [\underline v_1,\bar v_1].\ees
where $\epsilon_1$ and $\epsilon_2$ are given by Proposition \ref{th2.1}. It is clear that
\begin{equation}\label{2.9}
\begin{cases}
f_y(y,u,v)\geq 0,\ f_v(y,u,v)\leq 0,\ g_u(y,u,v)\geq 0, &y,u,v> 0,\\
|f(y,u_1,v_1)-f(y,u_2,v_2)|\leq K(|u_1-u_2|+|v_1-v_2|), &  y\geq 0, (u,v)\in Q,\\
|g(u_1,v_1)-g(u_2,v_2)|\leq K(|u_1-u_2|+|v_1-v_2|),  &  (u,v)\in Q,
\end{cases}
\end{equation}
for some $K>0$. With $\underline u_1$, $\bar u_1$, $\underline v_1$ and $\bar v_1$ given by \eqref{2.7},  we define the following iterative sequences:
 \bess
\left\{\begin{array}{ll}
\ds\bar u_{i+1}=\bar u_i+\frac{1}{K}f(\bar a,\bar u_i,\underline v_i),\ \ \underline  u_{i+1}=\underline  u_i+\frac{1}{K}f(\underline a,\underline u_i,\bar v_i),\ \ &i=1,2,\cdots,\\[1mm]
\ds\bar v_{i+1}=\bar v_i+\frac{1}{K}g(\bar u_i,\bar v_i),\ \ \underline  v_{i+1}=\underline v_i+\frac{1}{K}g(\underline u_i,\underline v_i),\ \ &i=1,2,\cdots,
 \end{array}\right.\eess

The iterative sequence defined above satisfy the following monotonicity and convergence properties.
\begin{lemma}\label{lemma2.2} Suppose that \eqref{2.2} holds.
	
{\rm (i)} The sequences of constants $\{\bar u_i\}_{i}^{\yy}$, $\{\underline u_i\}_{i=1}^{\yy}$, $\{\bar v_i\}_{i=1}^{\yy}$ and $\{\underline u_i\}_{i=1}^{\yy}$ satisfy
\bess
\left\{\begin{array}{ll}
0<\underline  u_1\leq\cdots \leq \underline u_i\leq \underline u_{i+1}\leq \cdots\leq \bar u_{i+1}\leq \bar u_{i}\cdots \leq \bar u_1,\\
0<\underline v_1\leq \cdots \leq \underline v_i\leq \underline v_{i+1}\leq \cdots\leq \bar v_{i+1}\leq  \bar v_{i}\cdots \leq \bar v_1.
 \end{array}\right.\eess

{\rm (ii)}  Denote
$\bar u_{\yy}:=\dd\lim_{i\to\yy} \bar u_i$, $ \ds \underline u_{\yy}:=\lim_{i\to\yy} \underline  u_i$, $\ds \bar v_{\yy}:=\lim_{i\to\yy} \bar v_i$ and $\ds \underline v_{\yy}:=\lim_{i\to\yy} \underline  v_i$.
Then
\begin{align}\label{2.10}
f(\bar a,\bar u_{\yy},\underline v_{\yy})=0,\;  f(\underline a,\underline u_{\yy},\bar v_{\yy})=0,\quad
g(\bar u_{\yy},\bar v_{\yy})=0,\;  g(\underline u_{\yy},\underline v_{\yy})=0.
\end{align}
\end{lemma}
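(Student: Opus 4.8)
The plan is to prove the two assertions of Lemma~\ref{lemma2.2} in sequence: first the monotonicity and ordering in (i) by induction on $i$, then the convergence in (ii) by passing to the limit in the recursion. The crucial structural fact to exploit is the sign conditions in \eqref{2.9}: $f_y\geq0$, $f_v\leq0$, $g_u\geq0$, together with the fact that the map $z\mapsto z+\tfrac1K f(\cdot,z,\cdot)$ (resp. with $g$) is nondecreasing in $z$ on $Q$ because $K$ is chosen as a Lipschitz bound. Concretely, if $z_1\le z_2$ lie in the relevant interval and $h$ is $K$-Lipschitz, then $z_1+\tfrac1K h(z_1)\le z_2+\tfrac1K h(z_2)$: subtract and use $|h(z_2)-h(z_1)|\le K(z_2-z_1)$. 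This ``quasi-monotone'' packaging of the iteration is the engine behind everything.

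For part (i) I would set up a simultaneous induction with hypothesis $H_i$: the four chains
\[
\underline u_1\le\cdots\le\underline u_i\le\underline u_{i+1}\le\bar u_{i+1}\le\bar u_i\le\cdots\le\bar u_1,
\]
and likewise for $v$, all hold, with all four values in $Q$. The base case $i=1$ uses \eqref{2.7a}: from \eqref{2.7a} the inequality $f(\bar a,\bar u_1,\underline v_1)\le a(x)-(\bar a+\epsilon_1)<0$ gives $\bar u_2=\bar u_1+\tfrac1K f(\bar a,\bar u_1,\underline v_1)<\bar u_1$, while $f(\underline a,\underline u_1,\bar v_1)>0$ gives $\underline u_2>\underline u_1$; the $v$-chains are even easier since $g(\bar u_1,\bar v_1)=g(\underline u_1,\underline v_1)=0$ forces $\bar v_2=\bar v_1$, $\underline v_2=\underline v_1$ at the first step, after which the coupling with the moving $u$-values kicks in. For the inductive step, to show $\bar u_{i+2}\le\bar u_{i+1}$ I write $\bar u_{i+2}=\bar u_{i+1}+\tfrac1K f(\bar a,\bar u_{i+1},\underline v_{i+1})$ and compare to $\bar u_{i+1}=\bar u_i+\tfrac1K f(\bar a,\bar u_i,\underline v_i)$; monotonicity of $z\mapsto z+\tfrac1K f(\bar a,z,\cdot)$ in the first slot (using $\bar u_{i+1}\le\bar u_i$ from $H_i$) plus $f_v\le 0$ with $\underline v_{i+1}\ge\underline v_i$ gives $\bar u_{i+2}\le\bar u_{i+1}$. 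The cross inequality $\underline u_{i+1}\le\bar u_{i+1}$ (and its $v$-analogue), needed to keep the iterates in $Q$, follows from the same monotonicity comparison applied to the pair $(\underline u_i,\bar u_i)$ together with $f_y\ge0$ to absorb the fact that one uses $f(\underline a,\cdot)$ and the other $f(\bar a,\cdot)$ with $\underline a\le\bar a$. The $g$-chains are handled identically using $g_u\ge0$ and $g$ decreasing in its own $v$-slot on $u>0$.

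For part (ii): by (i) the four sequences are bounded and monotone, hence convergent to limits $\bar u_\yy,\underline u_\yy,\bar v_\yy,\underline v_\yy$ in $Q$; and by (i) again $\underline u_\yy\le\bar u_\yy$, $\underline v_\yy\le\bar v_\yy$, with all four positive (bounded below by $\underline u_1=\underline v_1=\epsilon_2>0$). Letting $i\to\yy$ in $\bar u_{i+1}=\bar u_i+\tfrac1K f(\bar a,\bar u_i,\underline v_i)$ and using continuity of $f$ gives $\bar u_\yy=\bar u_\yy+\tfrac1K f(\bar a,\bar u_\yy,\underline v_\yy)$, i.e. $f(\bar a,\bar u_\yy,\underline v_\yy)=0$; the other three identities in \eqref{2.10} come out the same way. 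I expect the main obstacle to be the bookkeeping in the inductive step of (i): because the $u$- and $v$-iterations are coupled (each uses the other's values from the previous level) and because upper iterates use $\bar a$ while lower iterates use $\underline a$, one must be careful to verify that all eight monotonicity relations at level $i+1$ genuinely follow from the eight at level $i$, invoking the correct sign among $f_y\ge0$, $f_v\le0$, $g_u\ge0$ and the $K$-Lipschitz monotonicity at each comparison --- nothing deep, but it is the place where a sloppy argument would break, so I would lay out the induction hypothesis explicitly and check each relation in turn.
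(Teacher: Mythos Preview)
Your proposal is correct and follows essentially the same approach as the paper: induction on $i$ using the sign conditions in \eqref{2.9} together with the observation that $z\mapsto z+\tfrac1K h(z)$ is nondecreasing whenever $h$ is $K$-Lipschitz, then passing to the limit for part (ii). The paper writes out the key comparisons (e.g.\ $\bar u_{i+1}-\underline u_{i+1}\ge0$ and $\bar u_{i+1}-\bar u_i\le0$) line by line rather than packaging them as monotonicity of the iteration map, but the content is identical.
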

\begin{proof}
	(i) We prove the monotonicity of $\underline u_i$, $\bar u_i$, $\underline v_i$ and $\bar v_i$ with respect to $i$ inductively. First,  we show that
	\begin{align}\label{2.12a}
	\underline u_1\leq \underline u_2\leq \bar u_2\leq\bar u_1,\ \
	\underline v_1\leq \underline v_2\leq  \bar v_2\leq\bar v_1.
	\end{align}
From \eqref{2.7a},
	 \begin{equation}\label{2.13a}
\begin{cases}
\underline u_1\leq \bar u_1,\ \underline v_1\leq \bar v_1,\\
f(\bar a,\bar u_1,\underline v_1)\leq 0,\ f(\underline a,\underline u_1,\bar v_1)\geq 0,\  g(\bar u_1,\bar v_1)\leq 0, \ g(\underline u_1,\underline v_1)\geq 0.
\end{cases}
	 \end{equation}
Then the definitions of $\underline u_2$, $\bar u_2$,  $\underline v_2$ and $\bar v_2$  give $\underline u_1\leq \underline u_2$, $\bar u_2\leq \bar u_1$, $\underline v_1\leq \underline v_2$ and  $\bar v_2\leq \bar v_1$. From $\bar a\geq \underline a$ and  \eqref{2.9}, we derive
\begin{align*}
\bar u_{2}-\underline  u_{2}&=\bar u_1+\frac{1}{K}f(\bar a,\bar u_1,\underline v_1)-\underline  u_1-\frac{1}{K}f(\underline a,\underline u_1,\bar v_1)\\
&\geq \bar u_1-\underline  u_1+\frac{1}{K}f(\underline a,\bar u_1,\bar v_1)-\frac{1}{K}f(\underline a,\underline u_1,\bar v_1)\geq 0.
\end{align*}
 Similarly, we obtain $\bar v_{2}\geq \underline  v_{2}$. Therefore, \eqref{2.12a} holds.  Suppose that for $i\in {\mathbb N}$, we have
 \begin{align*}
 \underline u_1\leq \underline u_2\leq\cdots \leq \underline u_i\leq  \bar u_{i}\cdots \leq \bar u_2\leq\bar u_1,\ \
 \underline v_1\leq \underline v_2\leq\cdots \leq \underline v_i\leq   \bar v_{i}\cdots \leq \bar v_2\leq\bar v_1.
 \end{align*}
From  \eqref{2.9}, it follows
\begin{align*}
\bar u_{i+1}-\underline  u_{i+1}&=\bar u_i+\frac{1}{K}f(\bar a,\bar u_i,\underline v_i)-\underline  u_i-\frac{1}{K}f(\underline a,\underline u_i,\bar v_i),\\
&\geq \bar u_i-\underline  u_i+\frac{1}{K}f(\underline a,\bar u_i,\bar v_i)-\frac{1}{K}f(\underline a,\underline u_i,\bar v_i)\geq 0,\\
\bar u_{i+1}-\bar u_{i}&=\bar u_i+\frac{1}{K}f(\bar a,\bar u_i,\underline v_i)-\bar u_{i-1}-\frac{1}{K}f(\bar a,\bar u_{i-1},\underline v_{i-1})\\
&\leq \bar u_i-\bar u_{i-1}+\frac{1}{K}f(\bar a,\bar u_i,\underline v_{i-1})-\frac{1}{K}f(\bar a,\bar u_{i-1},\underline v_{i-1})\leq 0.
\end{align*}
Similarly, we can show that $\underline u_{i+1}\geq \underline u_{i}, \underline v_{i+1}\geq \underline v_{i}, \bar v_{i+1}\geq \underline  v_{i+1}$ and $\bar v_{i+1}\leq  \bar v_{i}$. Therefore the conclusion in (i) holds.

(ii) The formulas  in  (i) imply that the  sequences  $\{\bar u_i\}_{i}^{\yy}$, $\{\underline u_i\}_{i=1}^{\yy}$, $\{\bar v_i\}_{i=1}^{\yy}$ and $\{\underline u_i\}_{i=1}^{\yy}$  converge to some constants, respectively. Then \eqref{2.10} follows from the definitions of $\bar u_i, \bar v_i, \underline u_i$ and $\underline v_i$. The proof is completed.
 \end{proof}

The above lemma states that the system of equations \eqref{2.10}
admits a positive solution. We next show that the positive solution of \eqref{2.10} is unique, which in fact is the conclusion of Theorem \ref{thm1} (i).
\begin{proof}[Proof of Theorem \ref{thm1} (i)]
	From the definition of $f$ and $g$, every possible positive solution $(\underline u,\bar u,\underline v,\bar v)$ of  \eqref{2.10}  satisfies   $\bar v=\bar u$, $\underline v=\underline u$ and
	\bes\label{2.12}
	(\bar a- \bar u)(1+{r} \bar u)=b\underline u,\ \ \ \ 	(\underline a-\underline u)(1+{r} \underline u)=b\bar u,
	\ees
	which is equivalent to
	\begin{align}\label{2.17a}
	h(\underline u)=0 \ \ {\rm for }\ \ \underline u\in (0,\underline a)\ \ {\rm and}\ \ \bar u=(\underline a-\underline u)(1+{r} \underline u)/b,
	\end{align}
	where
	\begin{align*}
	h(\tau):=&[b\bar a-(\underline a-\tau)(1+{r} \tau)][b+{r} (\underline a-\tau)(1+{r} \tau)]-b^3\tau\\
	=&b^2\bar a+(b\bar a{r} -b)(\underline a-\tau)(1+{r}\tau)-{r} (\underline a-\tau)^2(1+{r}\tau)^2-b^3\tau.
	\end{align*}
	
	\noindent	{\bf Case 1}. $r>0$.

	Making use of $b \bar a< \underline a$ (see \eqref{2.2}) and $\underline a\leq \bar a$, we get
	\begin{align*}
	&h(0)=(b\bar a- \underline a)(b+\underline a{r})<0,\\
	&h(\underline a)=b^2\bar a -b^3\underline a=b^2(\bar a-b \underline a)>0.
	\end{align*}	
	Note  $\ds\lim_{|\tau|\to\yy}h(\tau)=-\yy$. We see that the equation $h(\tau)=0$ in $\tau\in [0,\underline a]$ either admits a unique zero or has two or three zeros. In the later case, $h$ satisfies
	\begin{align}\label{2.18}
	h'(\tau)\geq  0\ \ \ {\rm for\ all}\ \tau\leq 0,
	\end{align}
	which will be excluded in the following discussion.
	
	Direct calculation yields
	\begin{align*}
	h'(\tau)=&(b\bar a{r}-b)(\underline a{r}-1-2{r} \tau )-2{r}(\underline a-\tau)(1+{r} \tau)(\underline a{r}-1-2{r} \tau )-b^3\nonumber\\
	=&[(b\bar a{r}-b)-2{r}(\underline a-\tau)(1+{r} \tau)](\underline a{r}-1-2{r} \tau )-b^3.
	\end{align*}
	If $\underline a{r}\geq 1$,  then from $b\bar a<\underline a$,
	\begin{align*}
	&h'(0)=(b\bar a{r}-b-2{r} \underline a)(\underline a{r}-1)-b^3< (-b-\underline a{r})(\underline a{r}-1)\leq 0.
	\end{align*}
	On the other hand, if $\underline a{r}< 1$, then $(\underline a{r}-1)/(2{r})<0$ and
	\begin{align*}
	h\lf(\frac{\underline a{r}-1}{2{r}}\rr)=-b^3<0.
	\end{align*}
	Thus, \eqref{2.18} is impossible for any ${r}>0$. Consequently, $h(\tau)=0$ has only one zero in $\tau\in [0,\underline a]$.
	
	\noindent	{\bf Case 2}. $r=0$.
	
	Clearly, $h(\tau)=[b\bar a-(\underline a-\tau)]b-b^3\tau=b[b\bar a-\underline a+(1-b^2)\tau]$, and from \eqref{2.17a},
	\bes\label{2.14a}
	\underline u_{\yy}=\underline u_{\yy}=\frac{\underline a-b\bar a}{1-b^2},\ \bar u_{\yy}=\bar u_{\yy}=\frac{\bar a-b\underline a}{1-b^2}.
	\ees
	The proof is completed.
\end{proof}

We call that   $(\underline u_s(x), \bar u_s(x), \underline v_s(x), \bar v_s(x))$ is a pair of  quasi-solution of problem \eqref{esp3} if $(\underline u_s(x), \bar u_s(x), \underline v_s(x), \bar v_s(x))$ satisfies $\underline u_s(x)\leq \bar u_s(x)$, $\underline v_s(x)\leq \bar v_s(x)$ and
\begin{equation*}
\begin{cases}
-d_1(x)\Delta \bar u_s=f(x,\bar u_s(x),\underline v_s(x)), & x\in \Omega,\\
-d_1(x)\Delta \underline u_s=f(x,\underline u_s(x),\bar v_s(x)), & x\in \Omega,\\
\ds -d_2(x)\Delta \bar v_s=g(\bar u_s,\bar u_s), & x\in \Omega,\\
\ds -d_2(x)\Delta \underline v_s=g(\underline u_s,\underline u_s), & x\in \Omega,\\
\ds \f{\partial \bar u_s}{\partial\nu}=\ds \f{\partial \underline u_s}{\partial\nu}=\ds \f{\partial \bar v_s}{\partial\nu}=\ds \f{\partial \underline v_s}{\partial\nu}=0,& x\in \partial\Omega.
\end{cases}
\end{equation*}

\begin{proposition}\label{lemma2.3} Suppose \eqref{2.2} holds.
	
{\rm (i)} Let $(\underline u_s(x), \bar u_s(x), \underline v_s(x), \bar v_s(x))$ with $\underline u_s(x)$, $\bar u_s(x)\in [\underline u_1,\bar v_1]$, and $\underline v_s(x)$, $\bar v_s(x)\in [\underline v_1,\bar u_1]$  be a positive quasi-solution of problem \eqref{esp3}.   Then
\bes\label{2.14}
\underline u_{\yy}\leq\underline u_s(x)\leq \bar u_s(x)\leq \bar u_{\yy},\ \ \underline v_{\yy}\leq\underline v_s(x)\leq \bar v_s(x)\leq \bar v_{\yy},
\ees
where $\underline u_{\yy}, \underline v_{\yy}$, $\bar u_{\yy}, \bar v_{\yy}$ are defined by Lemma \ref{lemma2.2}.

{\rm (ii)} Let $(u(x,t),v(x,t))$ be the positive solution of problem \eqref{sp3}, and let $(u_*(x),v_*(x))$ be a positive steady state solution of \eqref{sp3}. Then the following estimates hold
 \bes\label{2.15}
 \left\{\begin{array}{ll}
\ds \underline u_{\yy}\leq \liminf_{t\to\yy}u(x,t)\leq \limsup_{t\to\yy}u(x,t)\leq \bar u_{\yy},\ \
\underline u_{\yy}\leq u_*(x)\leq \bar u_{\yy},\\
\ds \underline v_{\yy}\leq \liminf_{t\to\yy}v(x,t)\leq \limsup_{t\to\yy}v(x,t)\leq \bar v_{\yy}, \ \ \underline v_{\yy}\leq v_*(x)\leq \bar v_{\yy}.
  \end{array}\right.\ees
\end{proposition}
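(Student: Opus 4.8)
The plan is to prove (i) by an induction coupled with the maximum principle, and then to deduce both halves of (ii) from it; the time–dependent estimate in (ii) uses the parabolic analogue of the same iteration.

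\textbf{Part (i).} I would show by induction on $i\geq1$ that a positive quasi-solution $(\underline u_s,\bar u_s,\underline v_s,\bar v_s)$ with values in $Q$ in fact satisfies $\underline u_i\leq\underline u_s(x)\leq\bar u_s(x)\leq\bar u_i$ and $\underline v_i\leq\underline v_s(x)\leq\bar v_s(x)\leq\bar v_i$ on $\ol\Omega$; letting $i\to\yy$ and using Lemma \ref{lemma2.2} then gives \eqref{2.14}. The case $i=1$ is the hypothesis that the quasi-solution lies in $Q=[\underline u_1,\bar u_1]\times[\underline v_1,\bar v_1]$. For the step $i\to i+1$ I rewrite each quasi-solution equation in the form $(-d_j(x)\Delta+K)(\cdot)=K(\cdot)+(\text{nonlinear term})$, with $K$ the constant of \eqref{2.9}, chosen so that $z\mapsto Kz+f(y,z,v)$ and $z\mapsto Kz+g(u,z)$ are nondecreasing on $Q$. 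Combining this with the sign conditions in \eqref{2.9} — $f_y\geq0$ (so $a(x)$ may be replaced by $\bar a$ or $\underline a$), $f_v\leq0$ (so $\underline v_s,\bar v_s$ may be replaced by $\underline v_i,\bar v_i$), $g_u\geq0$ (so $\bar u_s,\underline u_s$ may be replaced by $\bar u_i,\underline u_i$) — and the level-$i$ bounds, one gets for instance
\[
-d_1(x)\Delta\bar u_s+K\bar u_s=K\bar u_s+f(a(x),\bar u_s,\underline v_s)\leq K\bar u_i+f(\bar a,\bar u_i,\underline v_i)=K\bar u_{i+1}.
\]
Since $\bar u_{i+1}$ is a constant, $W:=\bar u_{i+1}-\bar u_s$ satisfies $-d_1(x)\Delta W+KW\geq0$ in $\Omega$ with $\partial_\nu W=0$ on $\partial\Omega$, whence $W\geq0$ by the maximum principle, i.e. $\bar u_s(x)\leq\bar u_{i+1}$. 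The lower bound $\underline u_s(x)\geq\underline u_{i+1}$ and the two bounds $\underline v_{i+1}\leq\underline v_s(x)$, $\bar v_s(x)\leq\bar v_{i+1}$ follow by the identical computation.

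\textbf{Part (ii), steady states.} A positive steady state $(u_*,v_*)$ solves \eqref{esp3} and lies in $Q$ by Proposition \ref{th2.1}(2); running the induction of part (i) directly on the pair $(u_*,v_*)$ (again via the ``$+K$'' reformulation and the sign conditions \eqref{2.9}) yields $\underline u_i\leq u_*\leq\bar u_i$, $\underline v_i\leq v_*\leq\bar v_i$ for every $i$, hence $\underline u_\yy\leq u_*\leq\bar u_\yy$ and $\underline v_\yy\leq v_*\leq\bar v_\yy$.

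\textbf{Part (ii), time-dependent solution.} Here I would carry out the parabolic counterpart, following \cite{pao2002}. By Proposition \ref{th2.1}(2), $\underline u_1\leq u,v\leq\bar u_1$ for $t\geq T_1$, the $i=1$ case of the claim that for every $i$, $\underline u_i\leq\liminf_{\ttt}u$, $\limsup_{\ttt}u\leq\bar u_i$, $\underline v_i\leq\liminf_{\ttt}v$, $\limsup_{\ttt}v\leq\bar v_i$ uniformly on $\ol\Omega$. For $i\to i+1$, fix $\delta>0$, pick $T\geq T_1$ beyond which $u,v$ lie within $\delta$ of the level-$i$ box, and estimate
\[
u_t\leq d_1(x)\Delta u+f(a(x),u,v)\leq d_1(x)\Delta u+f(\bar a,\bar u_i,\underline v_i)+K(\bar u_i-u)+c\,\delta,\qquad t\geq T,
\]
the second inequality using $f_y\geq0$, $f_v\leq0$ and the monotonicity of $z\mapsto Kz+f(\bar a,z,\underline v_i)$, with $c$ depending only on $K$. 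Then $W:=\bar u_{i+1}+c'\delta-u$ satisfies $W_t-d_1(x)\Delta W+KW\geq0$ with $\partial_\nu W=0$ and $W(\cdot,T)\geq-c_0$ for a constant $c_0>0$; comparing $W$ with the spatially constant subsolution $-c_0e^{-K(t-T)}$ gives $\limsup_{\ttt}u\leq\bar u_{i+1}+c'\delta$, and $\delta\to0$ removes the error. The bound $\liminf_{\ttt}u\geq\underline u_{i+1}$ follows symmetrically with $\underline a$ in place of $\bar a$; for $v$ one uses $g_u\geq0$ and the Lipschitz bound in \eqref{2.9} in the same ``$+K$'' scheme, so that the level-$i$ bounds on both $u$ and $v$ produce the level-$(i+1)$ bounds $\bar v_{i+1},\underline v_{i+1}$. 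Letting $i\to\yy$ and invoking Lemma \ref{lemma2.2} gives \eqref{2.15}.

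\textbf{Where the difficulty lies.} Part (i) and the steady-state estimate are routine once the ``$+K$'' reformulation is in place. The delicate step is the parabolic iteration: one must keep the comparison functions inside $Q$ where \eqref{2.9} holds, carry the auxiliary $\delta$'s cleanly through the nonlinearities, and use that the ``$+K$'' term forces the discrepancy between $u$ (resp. $v$) and the level-$(i+1)$ bound to decay like $e^{-K(t-T)}$. For the predator equation the factor $v/u$ means one must bound $u$ from \emph{below}, away from $0$, before $v$ can be controlled — exactly what Proposition \ref{th2.1}(2) provides.
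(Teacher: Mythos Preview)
Your argument is correct and, for part (i), essentially identical to the paper's: the same induction with the ``$+K$'' reformulation and the elliptic maximum principle. Two minor differences in part (ii) are worth noting. First, for the steady state the paper simply observes that $(u_*,u_*,v_*,v_*)$ is itself a positive quasi-solution of \eqref{esp3} lying in $Q$, so \eqref{2.14} applies verbatim; there is no need to re-run the induction separately. Second, for the time-dependent estimate the paper does not carry out the parabolic iteration you sketch but instead invokes \cite[Theorem~3.2]{pao1997} directly (once Proposition~\ref{th2.1}(2) has placed the solution in $Q$ for $t\geq T_1$); your sketch is essentially a reconstruction of that cited argument, and it is sound, though the paper's route is shorter.
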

\begin{proof}
(i) To prove \eqref{2.14}, it is sufficient to show that
\bes\label{2.16}
\underline u_{i}\leq\underline u_s(x)\leq \bar u_s(x)\leq \bar u_{i},\ \ \underline v_{i}\leq\underline v_s(x)\leq \bar v_s(x)\leq \bar v_{i}, \ \ \forall \ i=1,2,\cdots
\ees
The proof is by induction on $i$. 
Since $\bar u_1=\bar v_1=\bar{a}+\epsilon_1$ and $\underline u_1=\underline v_1=\epsilon_2$,  the inequalities in \eqref{2.16} hold for $i=1$. Assuming the inequalities in \eqref{2.16} hold for $i\leq j_0$ where $j_0\geq 2$ is an integer, we will prove it for $i=j_0+1$. Making use of \eqref{2.9}, we deduce that
\begin{align*}
&-d_1(x) \Delta \bar u_{j_0+1}+K\bar u_{j_0+1}-Ku_s-f(x,\bar u_s,\underline v_s)\\
= & K\bar u_{j_0+1}-Ku_s-f(x,\bar u_s,\underline v_s)
=  K\bar u_{j_0}+f(\bar a,\bar u_{j_0},\underline v_{j_0})-Ku_s-f(x,\bar u_s,\underline v_s)\\
\geq & K\bar u_{j_0}-Ku_s+f(x,\bar u_{j_0},\underline v_s)-f(x,\bar u_s,\underline v_s)\geq 0.
\end{align*}
Denote $w(x)=\bar u_{j_0+1}-\bar u_s(x)$. Then $w$ satisfies $-d_1(x) \Delta w+K w\geq 0$ in $\Omega$ with  $\partial_{\nu}  w=0$ on $\partial\Omega$. It is derived by the maximum  principle of elliptic equations that $w\geq 0$. Ans so $\bar u_{j_0+1}\geq \bar u_s(x)$ on $\bar{\Omega}$. Similarly, we can prove that $\underline u_{j_0+1}\leq\underline u_s(x)$, $\underline v_{j_0+1}\leq\underline v_s(x)$ and $\bar v_s(x)\leq \bar v_{j_0+1}$. Thus the inequalities in \eqref{2.16} hold.

(ii)   If  the initial densities $u_0, v_0$ lie in the region $[\epsilon_2, \bar a+\epsilon_1]$, by  Theorem 3.2 in \cite{pao1997},  the solution $(u(x,t),v(x,t))$ satisfies the estimates in \eqref{2.15}. Recalling  \eqref{2.3}, we obtain that  any positive solution $(u(x,t),v(x,t))$ of \eqref{sp3} satisfies the estimates in \eqref{2.15}.

For  any positive steady state solution $(u_*(x),v_*(x))$  of \eqref{sp3},  $(u_*(x),u_*(x),v_*(x),v_*(x))$ is a pair of positive quasi-solution of problem \eqref{esp3}. Combining this fact with \eqref{2.3} and \eqref{2.14}, we obtain the estimates for  $(u_*(x),v_*(x))$ in \eqref{2.15}. The proof is completed.
\end{proof}

We remark that the conclusions of Proposition {\rm \ref{lemma2.3}} hold for some general functions $f$ and $g$ satisfying   \eqref{2.9} and \eqref{2.13a}.

\begin{proof}[Proof of Theorem \ref{thm1} (ii)]  It is clear that Theorem \ref{thm1} (ii) follows directly from Proposition
\ref{lemma2.3}.
\end{proof}

\subsection{Global stability of positive steady state solution}
To prove the global stability of positive steady state solution of \eqref{sp3}, we need the following two lemmas.
\begin{lemma}{\rm(\hspace{-.1mm}\cite[Theorem 1.1]{wmx2018aml} or \cite[Lemma 2.2]{nsw1})}\label{lemma2.5} Let $\delta> 0$ be a constant, and let  the two functions $\psi,h\in C([\delta,\yy))$ satisfy $\psi(t)\geq0$ and $\ds\int_{\delta}^{\yy} h(t)dt<\yy$, respectively.  Assume that $\varphi\in C^1([\delta,\yy))$ is bounded from below and satisfies
	\bess
	\varphi'(t)\leq-\psi(t)+h(t)\ \ \ \text{in}\, \ [\delta,\yy).
	\eess
	If one of the following conditions holds:
	\begin{itemize}
		\item[{\rm(i)}]  $\psi$ is uniformly continuous in $[\delta,\yy)$,
		\item[{\rm(ii)}] $\psi\in C^1([\delta,\yy))$  and $\psi'(t)\leq K$ in $[\delta,\yy)$ for some constant $K >0$,
		\item[{\rm (iii)}] $\psi\in C^{\beta}([\delta,\yy))$ with $0<\beta<1$, and for $\tau>0$  there exists $K>0$ just depending on $\tau$ such that   $\|\psi\|_{C^{\beta}([x,x+\tau])}\leq K$ for all $x\geq \delta$,
	\end{itemize}
	then $\dd\lim_{t\to\yy}\psi(t)=0$.
\end{lemma}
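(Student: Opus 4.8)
The plan is to recognise this as a generalised Barbalat-type lemma and prove it in two stages: first deduce the integrability $\int_{\delta}^{\yy}\psi(t)\,dt<\yy$ from the differential inequality, and then upgrade this to the pointwise decay $\psi(t)\to 0$ using whichever of the three regularity hypotheses is assumed.

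First I would integrate $\varphi'(t)\leq-\psi(t)+h(t)$ over $[\delta,t]$ to get
\[
\int_{\delta}^{t}\psi(s)\,ds\leq \varphi(\delta)-\varphi(t)+\int_{\delta}^{t}h(s)\,ds .
\]
Since $\varphi$ is bounded from below, $-\varphi(t)$ is bounded above by a constant, and since $\int_{\delta}^{\yy}h<\yy$ the last term is bounded above uniformly in $t$ (note $h$ need not be nonnegative, but its improper integral converges, hence is bounded along $t$). As $\psi\geq0$, the increasing quantity $\int_{\delta}^{t}\psi$ is bounded above, so $\int_{\delta}^{\yy}\psi(s)\,ds$ exists and is finite.

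Next I would argue by contradiction for the pointwise statement. If $\psi(t)\not\to0$, there are $\var>0$ and $t_n\to\yy$ with $\psi(t_n)\geq\var$; passing to a subsequence we may assume $t_{n+1}-t_n\geq 2$, so the intervals constructed below are pairwise disjoint and, for $n$ large, contained in $[\delta,\yy)$. In each of the three cases the regularity hypothesis produces a fixed length $\rho>0$, independent of $n$, on which $\psi$ stays above $\var/2$: under (i), uniform continuity gives $\rho$ with $|\psi(t)-\psi(s)|<\var/2$ whenever $|t-s|<\rho$, so $\psi\geq\var/2$ on $[t_n,t_n+\rho]$; under (ii), $\psi(t_n)-\psi(t_n-s)=\int_{t_n-s}^{t_n}\psi'(\tau)\,d\tau\leq Ks$, so $\psi\geq\var/2$ on $[t_n-\var/(2K),\,t_n]$; under (iii), $|\psi(t_n+s)-\psi(t_n)|\leq Ks^{\beta}$ for $s\in[0,\tau]$, so $\psi\geq\var/2$ on $[t_n,\,t_n+\min\{\tau,(\var/(2K))^{1/\beta}\}]$. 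In every case $\int_{I_n}\psi\geq(\var/2)\rho$ over a disjoint family of intervals $I_n$, which forces $\int_{\delta}^{\yy}\psi=\yy$ and contradicts the first step; hence $\psi(t)\to0$.

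The only mildly delicate points — not genuine obstacles — are thinning out $\{t_n\}$ so the intervals $I_n$ are disjoint and lie in the domain, and, in cases (ii) and (iii), choosing the correct direction in which to propagate the local lower bound on $\psi$: backward from $t_n$ under (ii), since there one controls only the upper bound of $\psi'$, and forward under (iii), to match the one-sided Hölder bound on $[x,x+\tau]$. The hard part, such as it is, is simply the standard observation that a nonnegative integrable function which fails to decay must nevertheless accumulate a fixed amount of mass on infinitely many disjoint intervals of fixed length; once the regularity hypothesis supplies that fixed length, the contradiction with $\int_{\delta}^{\yy}\psi<\yy$ is immediate.
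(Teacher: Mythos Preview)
Your proof is correct. The paper does not supply its own argument for this lemma; it merely quotes the result from the references \cite{wmx2018aml} and \cite{nsw1}, so there is no in-paper proof to compare against. Your two-stage Barbalat-type argument --- first integrating the differential inequality to obtain $\int_{\delta}^{\yy}\psi<\yy$, then using each regularity hypothesis to manufacture a fixed-length interval on which $\psi\geq\var/2$ near every $t_n$ --- is exactly the standard route and is carried out without gaps. The care you take with the direction of propagation in case (ii) (backward, since only $\psi'\leq K$ is assumed) versus case (iii) (forward, to match the one-sided Hölder interval $[x,x+\tau]$) is precisely what is needed.
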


\begin{lemma}{\rm \cite[Lemma 2.3]{nsw1}}\label{lemma2.6}
	Let $w,\,w_*\in C^{2}(\ol\Omega)$ be two positive functions. If   $\ds\f{\partial w}{\partial\nu}=0$ and $\ds\f{\partial w_*}{\partial\nu}=0$ on $\partial\Omega$, then
	\begin{align}\label{2.17}
	\int_{\Omega}\frac{w_*[w-w_*]}{w}\bigg(\Delta w-\frac{w}{w_*}\Delta w_*\bigg){\rm d}x
	\leq-\int_{\Omega} w^2\Big|\nabla \frac{w_*}{w}\Big|^2{\rm d}x\leq0.
	\end{align}
\end{lemma}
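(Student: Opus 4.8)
The plan is to rewrite the integrand in \eqref{2.17} in divergence form and then integrate by parts a single time. Since $w_*>0$ on $\ol\Omega$, the ratio $\rho:=w/w_*$ belongs to $C^2(\ol\Omega)$, and a direct (Picone-type) computation gives
\[
\nabla\cdot\big(w_*^2\nabla\rho\big)=\nabla\cdot\big(w_*\nabla w-w\nabla w_*\big)=w_*\Delta w-w\Delta w_*,
\]
so that $\Delta w-\dfrac{w}{w_*}\Delta w_*=\dfrac{1}{w_*}\nabla\cdot\big(w_*^2\nabla\rho\big)$. Since also $\dfrac{w_*(w-w_*)}{w}=w_*\cdot\dfrac{w-w_*}{w}=w_*\cdot\dfrac{\rho-1}{\rho}$, the integrand collapses, after the factor $w_*$ cancels, to $\dfrac{\rho-1}{\rho}\,\nabla\cdot\big(w_*^2\nabla\rho\big)$.

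Next I would integrate by parts on $\Omega$. The boundary flux is $w_*^2\nabla\rho\cdot\nu=\big(w_*\nabla w-w\nabla w_*\big)\cdot\nu=w_*\,\partial_\nu w-w\,\partial_\nu w_*=0$ on $\partial\Omega$ by the two no-flux hypotheses, so Green's identity yields
\[
\int_{\Omega}\frac{w_*(w-w_*)}{w}\Big(\Delta w-\frac{w}{w_*}\Delta w_*\Big){\rm d}x=-\int_{\Omega}\nabla\!\Big(\frac{\rho-1}{\rho}\Big)\cdot w_*^2\nabla\rho\,{\rm d}x.
\]
Because $\dfrac{\rho-1}{\rho}=1-\rho^{-1}$ we have $\nabla\big(\tfrac{\rho-1}{\rho}\big)=\rho^{-2}\nabla\rho$, so the right-hand side equals $-\int_{\Omega}\rho^{-2}w_*^2|\nabla\rho|^2\,{\rm d}x\le 0$, which gives the second inequality at once. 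For the first inequality I would note that it is in fact an equality: from $w_*/w=\rho^{-1}$ and $\nabla(\rho^{-1})=-\rho^{-2}\nabla\rho$ one gets $w^2\big|\nabla(w_*/w)\big|^2=\rho^2 w_*^2\cdot\rho^{-4}|\nabla\rho|^2=\rho^{-2}w_*^2|\nabla\rho|^2$, which matches the integrand just obtained.

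I do not expect a genuine obstacle here: the argument is a one-line integration by parts once the divergence identity above is spotted. The only points to check are that the integration by parts is legitimate — it is, since $w,w_*\in C^2(\ol\Omega)$ and $\partial\Omega$ is smooth, so $w_*^2\nabla\rho\in C^1(\ol\Omega)$ and the divergence theorem applies — and that $\rho$ and $\rho^{-1}$ are well defined and $C^2$, which follows from $w,w_*>0$ on $\ol\Omega$. An equivalent route, avoiding the substitution, is to split the integrand as $\frac{w_*(w-w_*)}{w}\Delta w-(w-w_*)\Delta w_*$, integrate each piece by parts, expand all gradients (writing $\frac{w_*(w-w_*)}{w}=w_*-\frac{w_*^2}{w}$), and observe that the cross terms $\nabla w\cdot\nabla w_*$ cancel, leaving precisely $-\int_{\Omega}w^2|\nabla(w_*/w)|^2\,{\rm d}x$; this is more computational but needs nothing beyond Green's identity.
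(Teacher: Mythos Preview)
Your argument is correct. The Picone-type identity $\nabla\cdot(w_*^2\nabla\rho)=w_*\Delta w-w\Delta w_*$ with $\rho=w/w_*$ reduces the integrand to $\dfrac{\rho-1}{\rho}\,\nabla\cdot(w_*^2\nabla\rho)$, the boundary term vanishes by the Neumann conditions, and the computation $\nabla\big(\tfrac{\rho-1}{\rho}\big)=\rho^{-2}\nabla\rho$ together with $w^2|\nabla(w_*/w)|^2=\rho^{-2}w_*^2|\nabla\rho|^2$ shows that \eqref{2.17} holds with equality in the first step. Your regularity checks ($\rho,\rho^{-1}\in C^2(\ol\Omega)$ because $w,w_*>0$ on $\ol\Omega$) are the right ones.

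As for comparison: the paper does not prove this lemma; it merely quotes it from \cite[Lemma~2.3]{nsw1}. So there is no in-paper argument to compare against. Your proof is a clean, self-contained verification, and the alternative route you sketch at the end --- splitting the integrand and integrating each piece by parts directly --- is essentially the computation one finds in the cited source; both lead to the same conclusion, with your substitution $\rho=w/w_*$ being slightly more economical.
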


With the help of  the above results, we now show  the global stability of positive steady state solution of \eqref{sp3} using Lyapunov functional method.

\begin{proof}[Proof of Theorem \ref{thm1} (iii)]
Let $(u(x,t),v(x,t))$ be the solution of \eqref{sp3}.  Define  a function $G: [0,\yy)\to \R$ by
	\bess
	G(t):=\int_{\Om}\int_{u_*(x)}^{u(x,t)} \frac{u_*(x)}{d_1(x)}\frac{s-u_*(x)}{s} \text{d}s\text{d}x+\eta\int_{\Om}\int_{v_*(x)}^{v(x,t)} \frac{v_*(x)}{d_2(x)}\frac{s-v_*(x)}{s} \text{d}s\text{d}x
	\eess
	with $\eta>0$ to be determined later.  Then $G(t)\geq 0$ for $t\geq 0$. Making use of \eqref{2.17},  we deduce
	\begin{align*}
		\frac{dG(t)}{dt}=&\int_{\Omega}\frac{u_*(u-u_*)}{d_1u}u_t\text{d}x+\eta\int_{\Omega}\frac{v_*(v-v_*)}{d_2v}v_t\text{d}x\nonumber\\
	=&\int_{\Omega}\lf(\frac{u_*(u-u_*)}{d_1u}\lf[d_1\Delta u+f(x,u,v)\rr]+\eta\frac{v_*(v-v_*)}{d_2v}\lf[d_2\Delta v+g(u,v)\rr]\rr)\text{d}x\nonumber\\[1.5mm]
	=&\int_{\Omega}\frac{u_*(u-u_*)}{d_1u}\lf(d_1\Delta u+f(x,u,v)
	-\frac{u}{u_*}d_1\Delta u_*-\frac{u}{u_*}f(x,u_*,v_*)\rr)\text{d}x\nonumber\\[1.5mm]
	&+\eta\int_{\Omega}\frac{v_*(v-v_*)}{d_2v}\lf(d_2\Delta v+g(u,v)
	-\frac{v}{v_*}d_2\Delta v_*-\frac{v}{v_*}g(u_*,v_*)\rr)\text{d}x\nonumber\\[1.5mm]
	=&\int_{\Omega}\lf[\frac{u_*(u-u_*)}{u}\lf(\Delta u-\frac{u}{u_*}
	\Delta u_*\rr)+\frac{u_*(u-u_*)}{d_1}\lf(\frac{f(x,u,v)}{u}-\frac{f(x,u_*,v_*)}{u_*}\rr)\rr]\text{d}x\nonumber\\[1.5mm]
	&+\eta\int_{\Omega}\lf[\frac{v_*(v-v_*)}{v}\lf(\Delta v-\frac{v}{v_*}
	\Delta v_*\rr)+\frac{v_*(v-v_*)}{d_2}\lf(\frac{g(u,v)}{v}-\frac{g(u_*,v_*)}{v_*}\rr)\rr]\text{d}x\nonumber\\[1.5mm]
	&\leq\int_{\Omega}\frac{u_*(u-u_*)}{d_1}\lf(\frac{f(x,u,v)}{u}-\frac{f(x,u_*,v_*)}{u_*}\rr)\text{d}x\nonumber\\
	&+\eta\int_{\Omega}\frac{v_*(v-v_*)}{d_2}\lf(\frac{g(u,v)}{v}-\frac{g(u_*,v_*)}{v_*}\rr)\text{d}x.
	\end{align*}
	By the definition of $f$ and $g$ in \eqref{2.7}, we derive
	\begin{align*}
	\frac{dG(t)}{dt}\leq &\int_{\Omega}\frac{u_*(u-u_*)}{d_1}\lf(-u-\frac{bv}{1+{r} u}+u_*+\frac{bv_*}{1+{r} u_*}\rr)\text{d}x\\
	&+\int_{\Omega}\eta\frac{v_*(v-v_*)}{d_2}\mu\lf(-\frac{v}{u}+\frac{v_*}{u_*}\rr)\text{d}x\\
	=&\int_{\Omega}\frac{-u_*[(1+{r} u)(1+{r} u_*)-b {r} v_*](u-u_*)^2-bu_* (1+{r} u_*)(u-u_*)(v-v_*)}{d_1(1+{r} u)(1+{r} u_*)}\text{d}x\\
	&+\int_{\Omega}\frac{\eta\mu v_*^2(u-u_*)(v-v_*)-\eta\mu u_*v_*(v-v_*)^2}{d_2uu_*}\text{d}x\\
	=&\int_{\Omega}\frac{E}{d_1d_2u u_*(1+{r} u)(1+{r} u_*)}\text{d}x,
	\end{align*}
	with
	\begin{align*}
	E:=&-d_2u u_*^2[(1+{r} u)(1+{r} u_*)-b{r} v_*](u-u_*)^2-b d_2u u_*^2(1+{r} u_*)(u-u_*)(v-v_*)\\
	&+d_1(1+{r} u)(1+{r} u_*) [\eta\mu v_*^2(u-u_*)(v-v_*)-\eta\mu u_*v_*(v-v_*)^2]\\
	=& A (u-u_*)^2+B(u-u_*)(v-v_*) +C (v-v_*)^2,
	\end{align*}
	where
	\begin{align*}
	A:=&-d_2u u_*^2[(1+{r} u)(1+{r} u_*)-b{r} v_*],\ C:=-d_1\eta\mu u_*v_*(1+{r} u)(1+{r} u_*), \\
	B:=&-b d_2u u_*^2(1+{r} u_*)+d_1\eta\mu v_*^2(1+{r} u)(1+{r} u_*).
	\end{align*}
	
	Next we choose a suitable $\eta>0$ such that $2\sqrt{AC}>|B|$, which then yields
	\begin{align}\label{2.20}
	\frac{dG(t)}{dt}\leq -\int_{\Omega}\frac{\delta(u-u_*)^2+\delta(v-v_*)^2}{d_1d_2uu_*(1+{r} u)(1+{r} u_*)}\text{d}x=:\psi(t)\leq 0,
	\end{align}
	for some  $0<\delta\ll 1$. 	Denote $\bar d_i=\max_{x\in\ol\Omega} d_i(x)$, $\underline d_i=\min_{x\in\ol\Omega} d_i(x)$ and
	\begin{align*}
	\eta =	\sqrt{\frac{\underline d_2 \bar d_2(\underline u_\yy-\epsilon) (\bar u_\yy-\epsilon)\underline u_\yy^3\bar u_\yy}{\underline d_1\bar d_1\underline v_\yy \bar v_\yy^3}} \frac{b}{\mu[1+{r} (\underline u_\yy-\epsilon)]}
	\end{align*}
	for some small $\epsilon>0$.   From \eqref{2.15},   there exists $T>1$ such that $u(x,t)\geq \underline u_{\yy}-\epsilon$ and $v(x,t)\geq \underline v_{\yy}-\epsilon$   for all $t\geq T$.  A simple calculation gives
	\begin{align*}
	&2\sqrt{AC}-|B|\geq 2\sqrt{AC}-[b d_2u u_*^2(1+{r} u_*)+d_1\eta\mu v_*^2(1+{r} u)(1+{r} u_*)]\\
	= &2\sqrt{d_1d_2\eta \mu uu_*^3v_*(1+{r} u)(1+{r} u_*)[(1+{r} u)(1+{r} u_*)-b{r} v_*]}\\
	&-[b d_2u u_*^2(1+{r} u_*)+d_1\eta\mu v_*^2(1+{r} u)(1+{r} u_*)]\\
	=&(1+{r} u)(1+{r} u_*)\sqrt{d_1d_2uu_*^3v_*}
	\bigg[2 \sqrt{\eta \mu-\frac{b\eta \mu{r} v_*}{(1+{r} u)(1+{r} u_*)}}\\
	&-\lf(b\sqrt{\frac{d_2uu_*}{d_1v_*}} \frac{1}{1+{r} u}+\eta\mu\sqrt{\frac{d_1v_*^3}{d_2uu_*^3}} \rr)\bigg]\\
	=&:E_1\lf[2 \sqrt{\eta \mu-\frac{b\eta \mu{r} v_*}{(1+{r} u)(1+{r} u_*)}}-\lf(b\sqrt{\frac{d_2uu_*}{d_1v_*}} \frac{1}{1+{r} u}+\eta\mu\sqrt{\frac{d_1v_*^3}{d_2uu_*^3}} \rr)\rr].
	\end{align*}
	Taking advantages of  \eqref{2.15}, $\underline v_{\yy}=\underline u_{\yy}$, $\bar v_{\yy}=\bar u_{\yy}$ and the definition of $\eta$,
	we derive that  for $t\geq T$,
	\begin{align*}
	&2\sqrt{AC}-|B|\geq E_1\Bigg[2 \sqrt{\eta \mu-\frac{b\eta \mu{r} \bar  v_\yy}{[1+{r} (\underline u_\yy-\epsilon)](1+{r} \underline u_\yy)}}\\
	&-\lf(b\sqrt{\frac{\bar d_2(\bar u_\yy-\epsilon)\bar u_\yy}{\underline d_1\underline v_\yy}} \frac{1}{1+{r} (\underline u_\yy-\epsilon)}+\eta\mu\sqrt{\frac{\bar d_1\bar v_\yy^3}{\underline d_2 (\underline u_\yy-\epsilon) \underline u_\yy^3}} \rr)\Bigg]\\
	=&E_1\lf(2 \sqrt{\eta \mu-\frac{b\eta \mu{r} \bar  v_\yy}{[1+{r} (\underline u_\yy-\epsilon)](1+{r} \underline u_\yy)}}-
2\sqrt{	\frac{b\eta\mu }{1+{r} (\underline u_\yy-\epsilon)}
	\sqrt{\frac{\bar d_1\bar d_2(\bar u_\yy-\epsilon)\bar u_\yy\bar v_\yy^3}{\underline d_1\underline d_2(\underline u_\yy-\epsilon) \underline u_\yy^3\underline v_\yy}}
}\rr)\\
=&E_1\lf(2 \sqrt{\eta \mu-\frac{b\eta \mu{r} \bar  u_\yy}{[1+{r} (\underline u_\yy-\epsilon)](1+{r} \underline u_\yy)}}-
2\sqrt{	\frac{b\eta\mu }{1+{r} (\underline u_\yy-\epsilon)}
	\sqrt{\frac{\bar d_1\bar d_2(\bar u_\yy-\epsilon)\bar u_\yy^4}{\underline d_1\underline d_2(\underline u_\yy-\epsilon) \underline u_\yy^4}}
}\rr).
	\end{align*}
Then $2\sqrt{AC}-B>0$ follows from \eqref{4.6} (with $\epsilon\to 0$) and $(\underline a-\underline u)(1+{r} \underline u)=b\bar u$ (See \eqref{2.12}). Thus \eqref{2.20} holds for $t\geq T$.
	
Next we show the global stability of the positive steady state solution $(u_*,v_*)$. By \eqref{2.4} and  the definition of $\psi(t)$, we see that  $|\psi'(t)|<C_1$ in $t\in[T,\yy)$ for some $C_1>0$. Then it follows from Lemma
\ref{lemma2.5} that
\bess
\lim_{t\to\yy}\psi(t)=-\int_{\Omega}\frac{\delta(u-u_*)^2+\delta(v-v_*)^2}{d_1d_2uu_*}\text{d}x=0.
\eess
Recalling that  $u(t,x)\geq \epsilon_2>0$ for $t\geq T_1$ by \eqref{2.3}, we have
\bes\label{2.19}
\lim_{t\to\yy}u(x,t)=u_*(x), \ \ \lim_{t\to\yy}v(x,t)=v_*(x) \ \ {\rm in} \ \  L^2(\ol\Omega).
\ees
The estimate \eqref{2.4} also implies that the set $\{u(\cdot,t):t\geq 1\}$ is relatively compact
in $C^2(\ol\Omega)$. Therefore, we may assume that
\[\|u(x,t_k)-\td u(x)\|_{C^2(\ol\Omega)},\|v(x,t_k)-\td v(x)\|_{C^2(\ol\Omega)}\to 0\ \ \ {\rm as}\ \ t_k\to\yy\]
for some functions $\td u,\td v\in C^2(\ol\Omega)$. Combining this with \eqref{2.19}, we could conclude that $\td u(x)\equiv u_*(x)$ and $\td v(x)\equiv v_*(x)$ for $x\in\ol\Omega$. Thus $\dd\lim_{t\to\yy}u(x,t)=u_*(x)$ and $\dd\lim_{t\to\yy}v(x,t)=v_*(x)$ in $C^2(\ol\Omega)$.
The proof is finished.
\end{proof}

\begin{remark}\label{end} The condition \eqref{4.6} for the global stability of positive steady state is an implicit one as
the quasi-steady state $(\bar u_\yy, \underline u_\yy, \bar v_\yy, \underline v_\yy)$ cannot be solved explicitly except when $r=0$ (see \eqref{ss}). We observe that \eqref{4.6} holds in the following cases:
	
	\begin{itemize}
		\item[{\rm (1)}]   $b>0$ is sufficiently small. In fact,
		from \eqref{2.12} we see $\bar u_\yy \approx\ds\bar a-\frac{\underline u_\yy}{(1+{r} \bar u_\yy)}b\to \bar a$ and $\underline u_\yy \approx \ds\underline a-\frac{\bar u_\yy}{(1+{r} \underline u_\yy)}b\to \underline a$ as $b\to 0$ since $\bar u_\yy$, $\underline u_\yy\in [0,\bar a]$. Hence,
		\begin{align*}
		(1+2r \underline u_\yy-r \underline a)\lf(\frac{\underline u_\yy}{\bar u_\yy}\rr)^{5/2}\to (1+r \underline a)  \lf({\underline a}/{\bar a}\rr)^{5/2}\ \ \ {\rm as}\ b\to 0,
		\end{align*}
		which immediately implies that \eqref{4.6} is satisfied for small $b>0$.
		\item[{\rm (2)}]  $a_A=\bar a-\underline a$ is sufficiently small. For any $M>1$, there exists $\tilde a>0$ such that when $0<a_A<\tilde a$, we have $\bar u_\yy/\underline u_\yy<M$. Then \eqref{4.6} holds if $b$ satisfies
\bes\label{4.66}
 b<(1+r \underline a)\lf[\frac{\min d_1(x)\min d_2(x)}
{\max d_1(x)\max d_2(x)} \rr]^{1/2} M^{-5/2}.
\ees
	\end{itemize}
\end{remark}

		\bibliographystyle{plain}
	\bibliography{NSW-spatial-predator-prey}

\begin{thebibliography}{10}

\bibitem{cc1993}
R.~S. Cantrell, C.~Cosner, and V.~Hutson.
\newblock Permanence in ecological systems with spatial heterogeneity.
\newblock {\em Proc. Roy. Soc. Edinburgh Sect. A}, 123(3):533--559, 1993.

\bibitem{cs2012}
S.~S. Chen and J.~P. Shi.
\newblock Global stability in a diffusive {H}olling-{T}anner predator-prey
  model.
\newblock {\em Appl. Math. Lett.}, 25(3):614--618, 2012.

\bibitem{csw2012}
S.~S. Chen, J.~P. Shi, and J.~J. Wei.
\newblock Global stability and {H}opf bifurcation in a delayed diffusive
  {L}eslie-{G}ower predator-prey system.
\newblock {\em Internat. J. Bifur. Chaos Appl. Sci. Engrg.}, 22(3):1250061, 11,
  2012.

\bibitem{Du2004}
Y.~H. Du and S.-B. Hsu.
\newblock A diffusive predator-prey model in heterogeneous environment.
\newblock {\em J. Differential Equations}, 203(2):331--364, 2004.

\bibitem{DuShi2007}
Y.~H. Du and J.~P. Shi.
\newblock Allee effect and bistability in a spatially heterogeneous
  predator-prey model.
\newblock {\em Trans. Amer. Math. Soc.}, 359(9):4557--4593, 2007.

\bibitem{hw1989}
J.~K. Hale and P.~Waltman.
\newblock Persistence in infinite-dimensional systems.
\newblock {\em SIAM J. Math. Anal.}, 20(2):388--395, 1989.

\bibitem{hsu1995}
S.~B. Hsu and T.~W. Huang.
\newblock { Global stability for a class of predator-prey system}.
\newblock {\em SIAM. J. Appl. Math}, 55(3):763--783, 1995.

\bibitem{hsu1998}
S.~B. Hsu and T.~W. Hwang.
\newblock {Uniqueness of limit cycles for a predator-prey system of Holling and
  Leslie type}.
\newblock {\em Canad. Appl. Math. Quart}, 6(2):91--117, 1998.

\bibitem{hs1992}
V.~Hutson and K.~Schmitt.
\newblock Permanence and the dynamics of biological systems.
\newblock {\em Math. Biosci.}, 111(1):1--71, 1992.

\bibitem{Jiang2013}
X.~Li, W.~H. Jiang, and J.~P. Shi.
\newblock Hopf bifurcation and {T}uring instability in the reaction-diffusion
  {H}olling-{T}anner predator-prey model.
\newblock {\em IMA J. Appl. Math.}, 78(2):287--306, 2013.

\bibitem{may1974stability}
R.~M. May.
\newblock {\em {Stability and complexity in model ecosystems}}.
\newblock Princeton Univ. Press, second edition, 1974.

\bibitem{Min2019}
N.~Min and M.~X. Wang.
\newblock Hopf bifurcation and steady-state bifurcation for a {L}eslie-{G}ower
  prey-predator model with strong {A}llee effect in prey.
\newblock {\em Discrete Contin. Dyn. Syst.}, 39(2):1071--1099, 2019.

\bibitem{nsw1}
W.~J. Ni, J.~P. Shi, and M.~X. Wang.
\newblock Global stability of nonhomogeneous equilibrium solution for the
  diffusive {L}otka-{V}olterra competition model.
\newblock {\em Calc. Var. Partial Differential Equations}, 59(4):132, 2020.

\bibitem{Ni2016}
W.~J. Ni and M.~X. Wang.
\newblock Dynamics and patterns of a diffusive {L}eslie-{G}ower prey-predator
  model with strong {A}llee effect in prey.
\newblock {\em J. Differential Equations}, 261(7):4244--4274, 2016.

\bibitem{Ni2017}
W.~J. Ni and M.~X. Wang.
\newblock Dynamical properties of a {L}eslie-{G}ower prey-predator model with
  strong {A}llee effect in prey.
\newblock {\em Discrete Contin. Dyn. Syst. Ser. B}, 22(9):3409--3420, 2017.

\bibitem{pao1997}
C.~V. Pao.
\newblock Systems of parabolic equations with continuous and discrete delays.
\newblock {\em J. Math. Anal. Appl.}, 205(1):157--185, 1997.

\bibitem{pao2002}
C.~V. Pao.
\newblock Convergence of solutions of reaction-diffusion systems with time
  delays.
\newblock {\em Nonlinear Anal.}, 48(3):349--362, 2002.

\bibitem{peng2005}
R.~Peng and M.~X. Wang.
\newblock {Positive steady states of the Holling--Tanner prey--predator model
  with diffusion}.
\newblock {\em Proc. Royal Soc. Edin. A}, 135(01):149--164, 2005.

\bibitem{pw2007}
R.~Peng and M.~X. Wang.
\newblock Global stability of the equilibrium of a diffusive {H}olling-{T}anner
  prey-predator model.
\newblock {\em Appl. Math. Lett.}, 20(6):664--670, 2007.

\bibitem{Qi2016-2}
Y.~W. Qi and Y.~Zhu.
\newblock Global stability of {L}eslie-type predator-prey model.
\newblock {\em Methods Appl. Anal.}, 23(3):259--268, 2016.

\bibitem{Qi2016}
Y.~W. Qi and Y.~Zhu.
\newblock The study of global stability of a diffusive {H}olling-{T}anner
  predator-prey model.
\newblock {\em Appl. Math. Lett.}, 57:132--138, 2016.

\bibitem{tanner1975}
J.~T. Tanner.
\newblock {The stability and the intrinsic growth rates of prey and predator
  populations}.
\newblock {\em Ecology}, 56:855--867, 1975.

\bibitem{wmx2016jfa}
M.~X. Wang.
\newblock A diffusive logistic equation with a free boundary and sign-changing
  coefficient in time-periodic environment.
\newblock {\em J. Funct. Anal.}, 270(2):483--508, 2016.

\bibitem{wmx2018aml}
M.~X. Wang.
\newblock Note on the {L}yapunov functional method.
\newblock {\em Appl. Math. Lett.}, 75:102--107, 2018.

\end{thebibliography}
	
\end{document}